\newtheorem{lem}[equation]{Lemma}
\newtheorem{thm}[equation]{Theorem}
\newtheorem{prop}[equation]{Proposition}
\newtheorem{cor}[equation]{Corollary}
\theoremstyle{definition}
\newtheorem{defn}[equation]{Definition}
\theoremstyle{remark}
\newtheorem{rem}[equation]{Remark}
\newtheorem{exm}[equation]{Example}
\DeclareMathOperator{\fix}{fix}
\DeclareMathOperator{\ran}{ran}
\DeclareMathOperator*{\essup}{ess\,sup}
\newcommand{\Rmnum}[1]{\expandafter\@slowromancap\romannumeral #1@}
\newcommand{\sr}[1]{{\text\sl r}\left(#1\right)}
\newcommand{\SG}[1][T]{\left(#1(t)\right)_{t\geq 0}}
\newcommand{\sse}{\subseteq}
\newcommand{\bk}{\backslash}
\newcommand{\la}{\lambda}
\newcommand{\A}{{\mathcal{A}}}
\newcommand{\M}{{\mathcal{M}}}
\newcommand{\Lop}[1][X]{{\mathcal L}\left(#1\right)}
\newcommand{\PtSp}[1]{P_\sigma\left(#1\right)}
\newcommand{\C}{\mathbb{C}}
\newcommand{\R}{\mathbb{R}}
\newcommand{\N}{\mathbb{N}}
\newcommand{\beq}{\begin{equation}}
\newcommand{\eeq}{\end{equation}}
\newcommand{\beqn}{\begin{equation*}}
\newcommand{\eeqn}{\end{equation*}}
\newcommand{\beqa}{\begin{eqnarray}}
\newcommand{\eeqa}{\end{eqnarray}}
\newcommand{\ba}{\begin{align}}
\newcommand{\ea}{\end{align}}
\newcommand{\ban}{\begin{align*}}
\newcommand{\ean}{\end{align*}}
\newcommand{\cl}[1][I]{\overline{#1}}
\newcommand{\lt}{\left}
\newcommand{\rt}{\right}
\author{Retha Heymann}
\address{Universit\"at T\"ubingen\\
Arbeitsbereich Funktionalanalysis\\
Mathematisch-Naturwissenschaftliche Fakult\"at\\
Auf der Morgenstelle 10\\
D-72076 T\"ubingen}
\email{rehe@fa.uni-tuebingen.de}
\title[Eigenvalues and Stability Properties of Multiplication Semigroups]{{\bf Eigenvalues and Stability Properties of Multiplication Operators and Multiplication Semigroups}}
\thanks{The author would like to express her sincere gratitude to Fatih Bayazit for many fruitful discussions. 
The author greatly appreciates the comments of Roland Derndinger, Klaus-Jochen Engel, D\'avid Kunszenti-Kov\'acs, Rainer Nagel and Pavel Zorin-Kranich.
The conversations with Hans Zwart and Felix Schwenninger have inspired some ideas of great value.}
\subjclass{47D06}
\keywords{Banach-space-valued functions, eigenvalues, multiplication operators, operator multipliers, multiplication semigroups, stability (uniform, strong, weak, almost weak)}
\begin{document}

\begin{abstract}
We investigate uniform, strong, weak and almost weak stability of multiplication semigroups on Banach space valued $L^p$-spaces. We show that, under certain conditions, these properties can be characterized by analogous ones of the pointwise semigroups. Using techniques from selector theory, we prove a spectral mapping theorem for the point spectra of the pointwise and global semigroups and apply this as a major tool for determining almost weak stability.
\end{abstract}

\maketitle



One of the significant features of the Fourier transform is that it converts a differential operator into a multiplication operator induced by some scalar-valued function. The properties of the original operator are then determined by the values of this function. The same holds if a system of differential operators is transformed into a matrix valued multiplication operator on a vector valued function space. This motivates the systematic investigation of multiplication operators on Banach space valued function spaces.

Such operators (and semigroups generated by them) have been studied by, e.g. Holderrieth \cite{Hol91} as well as Hardt and Wagenf\"uhrer \cite{HaWa96} for matrix multiplication semigroups and by Arendt and Thomaschewski \cite{ThoAr05} and Graser \cite{Gra97} in the infinite dimensional case. See also \cite[Section 4]{MuNi11} and \cite{Neerven93}.
Qualitative properties of such semigroups, e.g. various stability concepts, are of great interest in control theory (see e.g. \cite{CIZ09}). In fact, motivated by these applications, Hans Zwart has proved a characterisation of strong stability of a multiplication semigroup in the finite dimensional case (see \cite{Zwart}), while so-called polynomial stability of multiplication semigroups is characterized in Theorem 4.4 of \cite{BEP06}.

The general question in this context is to what extent the global properties of a multiplication operator are determined by the local properties of the pointwise operators. In this paper we systematically investigate spectral and stability properties of multiplication semigroups. Our aim is thus to understand how these properties are related to those of the corresponding pointwise operators or semigroups, as explained below. As a major tool and also as a result of independent interest, we obtain a perfect characterisation of the eigenvalues of multiplication operators (Theorem \ref{thmPtSpM}). Furthermore, we indicate how the global and local stability properties are related for uniform (Theorem \ref{thmUniform}), strong (Theorem \ref{thmSG_STRONG}), weak (Proposition \ref{propWeak}) and almost weak (Theorem \ref{thmAwsSg}) convergence. Finally, we include a section in which we state analogous stability results for the powers of multiplication operators. 

Throughout this text we assume that the measure space $(\Omega,\Sigma, \mu)$ is $\sigma$-finite. For a separable Banach space $X$, $L^p(\Omega,X)$ denotes the Bochner space $L^p(\Omega,\Sigma,\mu;X)$ for a fixed value of $1\leq p \leq\infty$ {(see e.g \cite{abhn01}, or \cite{DU77}).}


\begin{defn}[Multiplication Operator, Pointwise Operator]\label{defMULT} 
Let $X$ be a separable Banach space and let $M:\Omega\rightarrow\Lop$ be
such that, for every $x\in X$, the function
$$
\Omega \ni s\mapsto M(s)x \in X
$$
is Bochner measurable. The {\em multiplication operator} $\M$ on $L^p(\Omega,X)$, with  $1\leq p \leq\infty$, is defined by
$$
(\M f)(s) := M(s)f(s) \text{ for all } s\in\Omega
$$
with
$$
D(\M)=\Bigl\{f\in L^p(\Omega,X)\ :\ M(\cdot)f(\cdot)\in L^p(\Omega,X) \Bigr\}.
$$
In this context, we call the operators $M(s)$ with $s\in\Omega$ the {\em pointwise operators} on $X$.
\end{defn}

\begin{rem}
The Bochner measurability of $s\mapsto M(s)x$ for all $x \in X$ implies that the function $M(\cdot)f(\cdot)$ is also measurable if $f\in L^p(\Omega,X)$, see \cite[Lemma 2.2.9]{Tho03}.
\end{rem}

\begin{defn}[Multiplication Semigroup]{\cite[Definition 2.3.8, p.\,45]{Tho03}}
If a $C_0$-semigroup $\bigl(\M(t)\bigr)_{t\geq 0}$ on $L^p(\Omega,X)$ consists of multiplication operators on $L^p(\Omega,X)$, it is called a {\em multiplication semigroup}. (For the general theory of $C_0$-semigroups, we refer to \cite{enna00}.)
\end{defn}


\begin{rem}\label{remNOT}
If $\bigl(\A,D(\A)\bigr)$ is the generator of the multiplication semigroup $\bigl(\M(t)\bigr)_{t\geq 0}$, then there exists a family of operators $A(s)$ with domain $D(A(s))$ on $X$ such that $A(s)$ is the generator of a $C_0$-semigroup on $X$ for all $s\in\Omega\bk N$ for some null set $N$ (see \cite[Theorem 2.3.15, p.\,49]{Tho03}). We call these semigroups on $X$ the {\em pointwise semigroups}.

We denote the multiplication semigroup $\bigl(\M(t)\bigr)_{t\geq 0}$ by $\left(e^{t\A}\right)_{t\geq 0}$ and the pointwise semigroups by $\bigl(e^{tA(s)}\bigr)_{t\geq 0}$ for all $s\in\Omega\bk N$.

Furthermore, for every $t\geq 0$, the function from $\Omega$ to $\Lop$, $s\mapsto e^{tA(s)}$, is measurable and the operator $e^{t\A}$ is the corresponding multiplication operator on $L^p(\Omega,X)$, cf. \cite[Proposition 2.3.12, p.\,48]{Tho03}.

\end{rem}


Before discussing stability properties, we recall the characterisation of boundedness of a multiplication operator by Thomaschewski (cf. \cite{Tho03}) or Klaus-J.\ Engel (cf. \cite[Chapter IX, Proposition 1.3]{en97}).



\begin{lem}\label{lemNORM}\cite[Proposition 2.2.14, p.\ 35]{Tho03}
Let $\M$ be a multiplication operator on $L^p(\Omega,X)$ induced by the function $M(\cdot)$ as in Definition \ref{defMULT}. The operator $\M$ is bounded if and only if the function $M(\cdot)$ is essentially bounded{, i.e. if the function $M(\cdot)$ is bounded up to a set of a measure zero. In this case}, we have that
\begin{align*}
\|\M\| &= \essup_{s\in\Omega}{\|M(s)\|} \\
  &:= \inf\Bigl\{ C \geq 0\ :\ \mu\bigl(\left\{s\in \Omega\ :\ \|M(s)\| > C\right\}\bigr) = 0 \Bigr\}.\\
\end{align*}

\end{lem}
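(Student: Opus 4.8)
The plan is to prove the norm formula by establishing two inequalities. The core fact to exploit is the standard $L^p$-duality/pointwise structure: for a multiplication operator, the action at almost every point $s$ is governed by the pointwise operator norm $\|M(s)\|$, and the global $L^p$-norm should be controlled by the essential supremum of these pointwise norms. Let me write $C_{\infty} := \essup_{s\in\Omega}\|M(s)\|$ for brevity in the sketch.

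For the easy inequality $\|\M\| \leq C_{\infty}$, I would first treat the case $1 \leq p < \infty$. For $f \in D(\M)$, the pointwise estimate $\|(\M f)(s)\| = \|M(s)f(s)\| \leq \|M(s)\|\,\|f(s)\| \leq C_{\infty}\,\|f(s)\|$ holds for almost every $s$ (outside the null set where $\|M(s)\| > C_{\infty}$). Raising to the $p$-th power and integrating gives $\|\M f\|_p^p \leq C_{\infty}^p \|f\|_p^p$, hence $\|\M f\|_p \leq C_{\infty}\|f\|_p$, which shows boundedness with $\|\M\| \leq C_{\infty}$; the case $p = \infty$ is the same pointwise estimate with the essential supremum in place of the integral. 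This direction also disposes of the ``if'' half of the boundedness claim: if $M(\cdot)$ is essentially bounded then $\M$ is bounded.

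The harder inequality is $\|\M\| \geq C_{\infty}$, together with the ``only if'' direction. I expect this to be the main obstacle, because it requires manufacturing, for each threshold below $C_{\infty}$, test functions in $L^p(\Omega,X)$ that nearly realise the pointwise norms simultaneously on a set of positive measure. Concretely, suppose $C < C_{\infty}$; then the set $S := \{s \in \Omega : \|M(s)\| > C\}$ has positive measure. On a suitable subset I want to choose a measurable selection $s \mapsto x_s$ with $\|x_s\| = 1$ and $\|M(s)x_s\| > C$; the existence of such a selection is where separability of $X$ and the Bochner measurability assumption on $s \mapsto M(s)x$ are essential. Here I anticipate the genuine technical work: producing a \emph{measurable} choice of near-maximisers. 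A clean way is to fix a countable dense set $\{x_n\}_{n \in \N}$ in the unit sphere of $X$, observe that $\|M(s)\| = \sup_n \|M(s)x_n\|$ by density and continuity, and then use the countable supremum to partition $S$ (up to a null set) into measurable pieces on which some fixed $x_n$ already satisfies $\|M(s)x_n\| > C$. This reduces the selection to a countable measurable-partition argument and avoids invoking heavier measurable-selection theorems.

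Having obtained such a partition, I would exploit the finite subset property of $(\Omega,\Sigma,\mu)$ to extract from (one of) these pieces a measurable set $Z$ of finite positive measure on which a single $x_n$ works, and then test $\M$ against $f := x_n \cdot \mathbbm{1}_Z$, which lies in $L^p(\Omega,X)$ precisely because $0 < \mu(Z) < \infty$. For $1 \leq p < \infty$ this gives $\|\M f\|_p^p = \int_Z \|M(s)x_n\|^p\, d\mu(s) \geq C^p \mu(Z) = C^p \|f\|_p^p$, so $\|\M\| \geq C$; for $p = \infty$ the same test function yields $\|\M f\|_\infty \geq C$. Letting $C \uparrow C_{\infty}$ gives $\|\M\| \geq C_{\infty}$, completing the norm identity. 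The same construction proves the ``only if'' direction of boundedness by contraposition: if $M(\cdot)$ is not essentially bounded, then $C_{\infty} = \infty$, the sets $S$ have positive measure for every $C$, and the test functions above show $\|\M\|$ is unbounded. I would take care throughout to handle the null set $N$ on which pointwise operators may be ill-behaved by intersecting all the sets in play with $\Omega \bk N$, which changes none of the measures involved.
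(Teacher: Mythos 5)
Your proposal is correct and follows essentially the same route as the paper's proof: both use a countable dense sequence $(x_n)$ in the unit sphere of the separable space $X$ to decompose the superlevel set $\{s\in\Omega : \|M(s)\| > C\}$ into the measurable pieces $\{s\in\Omega : \|M(s)x_n\| > C\}$, invoke the finite subset property to obtain one piece of finite positive measure, and test $\M$ against an indicator function times a fixed $x_n$. Your small additions (the explicit $p=\infty$ case and letting $C$ increase to $\essup_{s\in\Omega}\|M(s)\|$) are routine refinements of the same argument, not a different approach.
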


We are interested in the extent to which stability properties of the pointwise semigroups determine stability properties (see the sections below) of the corresponding multiplication semigroup. This is not always the case as is illustrated by a simple modification of Zabzyk's classical counterexample to the spectral mapping theorem for $C_0$-semigroups (cf. \cite{enna00} p. 273, Counterexample 3.4).

Other examples are discussed in \cite[Section 2]{CIZ09}.

\section{Eigenvalues of a multiplication operator}

Since many stability properties can be characterised by spectral properties (see, for instance, \cite[Chapter IV and V]{enna00}), we start by investigating the relation between the pointwise spectra $\sigma(M(s))$ with $s\in\Omega$ and the global spectrum $\sigma(\M)$.

As a first and important step, we characterise the eigenvalues of $\M$ via $M(s)$ with $s\in\Omega$.

We take note of the fact that the nontrivial implication in the following theorem is essentially a selector result. See, for instance, \cite{KurRyll_65} for a standard selector theorem. It is possible to present a proof based on methods from this theory, but our proof is more elementary and not less elegant.

We would like to mention that the idea of the following proof has been sparked by discussions with Hans Zwart.

\begin{thm}\label{thmPtSpM}
For a multiplication operator $\M$ on $L^p\left(\Omega,X\right)$ with $1\leq p\leq\infty$ induced by the pointwise operators $\{M(s)\ :\ s\in\Omega\}$ and an arbitrary $\lambda \in \C$, the following equivalence holds:
$$
\la \in \PtSp{\M} \Longleftrightarrow \la \in \PtSp{M(s)} \mbox{ for $s \in Z$, }
$$
where $Z$ is a measurable subset of $\Omega$ with $\mu(Z)>0$.

\end{thm}


\begin{proof}
Assume that the pointwise operators $M(s)$ are injective, i.e., the kernels of $M(s)$ are zero for almost all $s\in\Omega$. Choose an arbitrary function $0\not=f\in L^p(\Omega,X)$. It is clear that $\M f \not= 0$, hence the kernel of $\M$ is zero.

Assume now that $M(s)$ is not injective for all $s$ in a set $N$ such that $\mu(N) > 0$. Without loss of generality, take $N = \Omega$.

\begin{description}
  \item[{Idea of proof}] \hfill \\
We will construct a sequence of countably-valued measurable functions $(f_m)$ in $L^p(\Omega,X)$, converging pointwise to some non-zero function such that the sequence of functions $(\M f_m)$ converges to zero in norm.

Choose 
 $$
   W := \{w_n : n\in\N\}
 $$
as a countable dense subset of the unit sphere of $X$. 

We construct for every $m\in\N$ countably many disjoint, measurable sets 
  $\tilde\Omega_{(n_1,n_2,\hdots,n_m)}$
indexed by $(n_1,n_2,\hdots,n_m) \in \prod_{k=1}^m\N$ such that
   \beq \label{eq:normM}
     \|M(s)w_{n_m}\| \leq \frac{1}{m} \text{ for all } s \in \tilde\Omega_{(n_1,n_2,\hdots,n_m)},
   \eeq

   \begin{align}\label{eq:inclusion}
     \Omega = \bigcup_{n_1\in\N} \bigcup_{n_2\in\N}\cdots \bigcup_{n_m\in\N} \tilde\Omega_{(n_1,n_2,\hdots,n_m)},
   \end{align}

and   
   \beq \label{eq:subset}
     \tilde\Omega_{(n_1,n_2,\hdots,n_m)} \sse \tilde\Omega_{(n_1,n_2,\hdots,n_{m-1})}.
   \eeq
Furthermore, if $s \in \tilde\Omega_{(n_1,n_2,\hdots,n_m)}$ then, for every $m \leq r \in \N$, there exists a convergent sequence $(a_k)_{k\in\N} \sse W$ with $a_k = w_{n_k}$ for $1 \leq k \leq m$, such that $\| M(s)a_k \| \leq \frac{1}{k}$ for $1 \leq k \leq r$.

Then, for every $m\in\N$, we define $f_m$ as 
 $$
   f_m := \sum_{(n_1,n_2,\hdots,n_{m}) \in \prod_{k=1}^{m}\N} {\mathbbm{1}_{\tilde\Omega_{(n_1,n_2,\hdots,n_m)}} w_{n_m}}.
 $$
For every $s\in\Omega$, the sequence
 $$
   (f_m(s))_{m\in\N} = (w_{n_m})_{m\in\N}
 $$
converges, $\|f_m(s)\| = \| w_{n_m} \| = 1 $ for all $m\in\N$ and
 $$
   \| M(s)f_m(s) \| = \| M(s)w_{n_m} \| \leq \frac{1}{m}.
 $$

  \item[{Definitions}] \hfill \\

Consider the following set of (convergent) sequences
 $$
   \mathcal{W} := 
       \left\{
         \begin{array}{ll}
	          (a_{k})_{k\in\N} : & a_{k} \in W \text{ for all } k\in\N, \text{ and } \\ 
	                             & \text{ for each } q\in\N , \|a_q - a_{q+j}\| < \frac{1}{q} \\
	                             & \text{ for all } j\in\N
         \end{array}
       \right\}.
 $$
Note that, to each sequence $(a_{k})_{k\in\N} \in \mathcal{W}$, there is a corresponding function 
  \beq \label{eq:beta}
    \beta : \N \rightarrow \N 
  \eeq
such that $a_k = w_{\beta(k)}$ for all $k\in\N$. Hence we can write $(a_k) = (w_{\beta(k)})$.

For each $m\in\N$ define the set $\mathcal{W}_{(n_1,n_2,\ldots,n_m)} \sse \mathcal{W}$ for each $m$-tuple $(n_1,n_2,\ldots,n_m) \in \prod_{j=1}^{m}\N$ as
 $$
   \mathcal{W}_{(n_1,n_2,\ldots,n_m)} := 
     \left\{
       \begin{array}{ll}
	          (a_{k})_{k\in\N} \in \mathcal{W} : & a_{k} = w_{n_k} \text{ for } 1 \leq k \leq m
        \end{array}
     \right\}.
 $$ 

Now, for all $n,j \in \N$, define the subsets
	  $$
	    \Omega_{n,j} := \left\{s \in \Omega : \|M(s)w_n\| \leq \frac{1}{j}\right\}
	  $$
of $\Omega$.


  \item[{Construction}] \hfill \\

  Take $m=1$.
  We now define the sets $\tilde\Omega_{(n_1)}$ for each $n_1 \in \N$ as
  
    \begin{align}
        \Omega_{(n_1)} 
        &:=
          \left\{
             \begin{array}{ll}
                s \in \Omega\ : & \text{ for each } r\in\N \text{ there exists a sequence } \\
                                & (a_{k})_{k\in\N} = (w_{\beta(k)})_{k\in\N} \in \mathcal{W}_{(n_1)} \\ 
                                & \text{ such that } s\in\bigcap_{j=1}^r{\Omega_{\beta(j),j}} 
	           \end{array}
          \right\}  \notag\\                       
       &\ = 
           \bigcap_{r \in \N}
           \left(
             \bigcup_{ (w_{\beta(k)})_{k\in\N} \in \mathcal{W}_{(n_1)}
                     }
                \left( \bigcap_{j=1}^r{\Omega_{\beta(j),j}}
                \right)
           \right)  \label{eq:cntble}
  \intertext{and}
      \tilde\Omega_{(n_1)} &:= \Omega_{(n_1)} \bk \bigcup_{q < n_1}\tilde\Omega_{(q)}.\notag
    \end{align}

The set $ \Omega_{(n_1)}$ is the countable intersection of the union of certain measurable sets of the form $\bigcap_{j=1}^r{\Omega_{\beta(j),j}}$. As we see in \eqref{eq:cntble} above, this union consists of those sets, for each of which the corresponding sequence $ (w_{\beta(j)})_{j\in\N}$ is in $\mathcal{W}_{(n_1,n_2,\ldots,n_m)} $ which is an uncountable set. However, there are only countably many sets of the form $\bigcap_{j=1}^r{\Omega_{\beta(j),j}}$ for a fixed $r\in\N$ and hence the union can be written as a countable union. Hence $\Omega_{(n_1)}$ and also $\tilde\Omega_{(n_1)}$ are measurable.
It now holds that
  \beqn
     \|M(s)w_{n_1}\| \leq 1
   \eeqn
for all $s \in \tilde\Omega_{(n_1)}$ and
   \begin{align*}
     \Omega = \bigcup_{n_1\in\N} \tilde\Omega_{(n_1)}.
   \end{align*}

Furthermore, if $s \in \tilde\Omega_{(n_1)}$ then, for every $r\in\N$, there exists a (convergent) sequence $(a_k)_{k\in\N} \in \mathcal{W}$ 
with $a_1 = w_{n_1}$ and $\| M(s)a_k \| \leq \frac{1}{k}$ for $1 \leq k \leq r$.


%

Now define the function
  $$
    f_1 := \sum_{n_1 \in \N}\mathbbm{1}_{\tilde\Omega_{(n_1)}} w_{n_1}.
  $$
Observe that $\|f_1\| = 1$ and $\|\M f_1\| \leq 1$ as desired.


\vspace*{5pt}
The recursive definition of the sets $\tilde\Omega_{(n_1,n_2,\hdots,n_m)}$ now follows.

Let $m \geq 2$ and assume that we have disjoint sets $\tilde\Omega_{(n_1,n_2,\hdots,n_{m-1})}$ with $(n_1,n_2,\hdots,n_{m-1}) \in \prod_{k=1}^{m-1}\N$ such that
   \beqn
     \|M(s)w_{n_{m-1}}\| \leq \frac{1}{m-1} \text{ for all } s \in \tilde\Omega_{(n_1,n_2,\hdots,n_{m-1})},
   \eeqn

   \begin{align*}
     \Omega = \bigcup_{n_1\in\N} \bigcup_{n_2\in\N}\cdots \bigcup_{n_{m-1}\in\N} \tilde\Omega_{(n_1,n_2,\hdots,n_{m-1})},
   \end{align*}
and   
    \beqn
     \tilde\Omega_{(n_1,n_2,\hdots,n_{m-1})} \sse \tilde\Omega_{(n_1,n_2,\hdots,n_{m-2})}.
   \eeqn
   
Moreover, for every $s \in \tilde\Omega_{(n_1,n_2,\hdots,n_{m-1})}$ and every $m-1 \leq r \in \N$, there exists a sequence $(a_k) \in \mathcal{W}$ with $a_k = w_{n_k}$ for $1 \leq k \leq m-1$ and $\| M(s)a_k \| \leq \frac{1}{k}$ for $1 \leq k \leq r$.   

Then, for every $(n_1,n_2,\hdots,n_{m-1}) \in \prod_{k=1}^{m-1}\N$ 
  and $n_m \in \N$, define the measurable sets

    \begin{align*}
        \Omega_{(n_1,n_2,\hdots,n_m)} 
        &:=
          \left\{
               \begin{array}{ll}
	                 s \in \Omega\ : &\text{ for all } r\in\N \text{ there exists a sequence} \\ 
                                   & (a_k)_{k\in\N} = (w_{\beta(k)})_{k\in\N}\in \mathcal{W}_{(n_1,n_2,\hdots,n_m)}  \\ 
                                   &  \text{ such that } s\in\bigcap_{j=1}^r{\Omega_{\beta(j),j}} 
               \end{array}
           \right\}\\
  \intertext{and}
      \tilde\Omega_{(n_1,n_2,\hdots,n_m)} &:=  \tilde\Omega_{n_1,n_2,\hdots,n_{m-1}} \cap \left( \Omega_{(n_1,n_2,\hdots,n_m)} \bk \bigcup_{q < n_m}\tilde\Omega_{(n_1,n_2,\hdots,n_{m-1},q)}\right).
    \end{align*}
For every $\tilde\Omega_{(n_1,n_2,\hdots,n_m)}$, the properties \eqref{eq:normM}, \eqref{eq:inclusion} and \eqref{eq:subset} hold and if $s \in \tilde\Omega_{(n_1,n_2,\hdots,n_m)}$ then, for every $m \leq r \in \N$, there exists a sequence $(a_k) \in \mathcal{W}$ with $a_k = w_{n_k}$ for $1 \leq k \leq m$ and $\| M(s)a_k \| \leq \frac{1}{k}$ for $1 \leq k \leq r$.

        
       
Define the function
  $$
   f_m := \sum_{(n_1,n_2,\hdots,n_m) \in \prod_{k=1}^m\N}{\mathbbm{1}_{\tilde\Omega_{(n_1,n_2,\hdots,n_m)}} w_{n_m}}.
  $$

Thus the sequence $(f_m)_{m\in\N}$ has been constructed with the desired properties.

Indeed, for every $m\in\N$, we have $\|f_m\| \geq 1$, because of \eqref{eq:inclusion} and the fact that, if $s\in \tilde\Omega_{(n_1,n_2,\hdots,n_m)}$, then $\|f(s)\| = \|w_{n_m}\| = 1$. We also have that $\|\M f_m\| \leq \frac{1}{m}$. Furthermore, the sequences $(f_m(s))_{m\in\N}$ are convergent for every $s\in\Omega$. The pointwise limit $f$ is measurable, nonzero and in the kernel of $\M$.

\end{description} 

\end{proof}


We obtain the following corollary directly from the above theorem by using the spectral mapping theorem for the point spectrum of the resolvent of a closed operator (see, for instance, \cite[Chapter IV, Theorem 1.13]{enna00}).


\begin{cor}\label{corPtSpA}
If $\left(e^{t\A}\right)_{t\geq 0}$ is a multiplication semigroup with generator $\A$ and $\lambda\in\C$, then the following equivalence holds.

$$
\la \in \PtSp{\A} \Longleftrightarrow \la \in \PtSp{A(s)} \mbox{ for $s \in Z$, }
$$
where $Z$ is a measurable subset of $\Omega$ with $\mu(Z)>0$.

\end{cor}


\section{Uniform Stability}

This short section is devoted to the strongest notion of stability, namely uniform stability.


\begin{defn}\cite[Definition V.1.1, p. 296]{enna00}
A $C_0$-semigroup $\SG$ of operators on a Banach space is called {\em uniformly stable} if $\|T(t)\|\rightarrow 0$ as $t\rightarrow\infty$.
\end{defn}

Lemma \ref{lemNORM} immediately leads to the following characterisation of uniform stability of multiplication semigroups.


\begin{thm} \label{thmUniform} 
Let $\bigl(e^{t\A}\bigr)_{t\geq 0}$ be a multiplication semigroup on $L^p(\Omega,X)$  with $1\leq p \leq \infty$.
 Then the following are equivalent.
\begin{enumerate}
	\item[(a)] The semigroup $\bigl(e^{t\A}\bigr)_{t\geq 0}$ is uniformly stable.
	\item[(b)] The pointwise semigroups converge to $0$ in norm, uniformly in $s$, i.e.
    $$
      \essup_{s\in\Omega}{\left\|e^{t A(s)}\right\|} \rightarrow 0\ \text{ as }\ t\rightarrow\infty.
    $$ 
	\item[(c)] At some $t_0 > 0$, the spectral radii $\sr{e^{t_0 A(s)}}$ of the pointwise semigroups 
	           satisfy $\essup_{s\in\Omega}{\sr{e^{t_0 A(s)}}} < 1$.
	\item[(d)] There exist constants $C \geq 1$ and $\epsilon > 0$ such that $\|e^{tA(s)}\| \leq Me^{-t\epsilon }$ for all $t\geq 0$ and almost all $s$.
\end{enumerate}
\end{thm}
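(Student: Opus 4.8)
The plan is to use Lemma \ref{lemNORM} to reduce every statement to norms of the pointwise semigroups, together with the standard fact that a $C_0$-semigroup is uniformly stable exactly when its growth bound is negative. Recall from the preceding remark that for each fixed $t\geq 0$ the operator $e^{t\A}$ is the multiplication operator with symbol $s\mapsto e^{tA(s)}$; hence Lemma \ref{lemNORM} yields the basic identity $\|e^{t\A}\| = \essup_{s\in\Omega}\|e^{tA(s)}\|$, which I would use throughout. The equivalence (a)$\Leftrightarrow$(b) is then immediate, since $\|e^{t\A}\|\to 0$ if and only if $\essup_{s\in\Omega}\|e^{tA(s)}\|\to 0$.

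Next I would dispatch the easy implications into the single operator family. For (d)$\Rightarrow$(b): if $\|e^{tA(s)}\|\leq Me^{-\epsilon t}$ for almost all $s$, then $\essup_{s\in\Omega}\|e^{tA(s)}\|\leq Me^{-\epsilon t}\to 0$. For (d)$\Rightarrow$(c) I would use the spectral radius formula $\sr{e^{t_0A(s)}} = \lim_{n\to\infty}\|e^{nt_0A(s)}\|^{1/n}$, which together with (d) gives $\sr{e^{t_0A(s)}}\leq e^{-\epsilon t_0}<1$ for almost all $s$, whence $\essup_{s\in\Omega}\sr{e^{t_0A(s)}}\leq e^{-\epsilon t_0}<1$. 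For (a)$\Rightarrow$(d) I would invoke semigroup theory for $\bigl(e^{t\A}\bigr)_{t\geq 0}$ itself: uniform stability forces its growth bound $\omega_0$ to be negative, so there are $M\geq 1$ and $\epsilon\in(0,-\omega_0)$ with $\|e^{t\A}\|\leq Me^{-\epsilon t}$ for all $t\geq 0$. By the basic identity, for each fixed $t$ this gives $\|e^{tA(s)}\|\leq Me^{-\epsilon t}$ for almost all $s$; taking the union of the exceptional null sets over rational $t$ produces a single null set outside which the estimate holds for all rational $t$, and strong continuity of the pointwise semigroups upgrades it to all $t\geq 0$. This closes the cycle among (a), (b) and (d).

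It remains to bring (c) into the cycle, and this is where I expect the real difficulty. The natural route is to show that $\sr{e^{t_0\A}} = \essup_{s\in\Omega}\sr{e^{t_0A(s)}}$: since $\sr{e^{t_0\A}} = e^{t_0\omega_0}$, the bound $\sr{e^{t_0\A}}<1$ is equivalent to (a). Using the basic identity and $\sr{e^{t_0\A}}=\lim_{n\to\infty}\|e^{nt_0\A}\|^{1/n}=\lim_{n\to\infty}\bigl(\essup_{s\in\Omega}\|e^{nt_0A(s)}\|\bigr)^{1/n}$, the inequality $\sr{e^{t_0\A}}\geq\essup_{s\in\Omega}\sr{e^{t_0A(s)}}$ follows readily by comparing with each pointwise orbit, taking $n$-th roots and letting $n\to\infty$.

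The hard part will be the reverse bound $\sr{e^{t_0\A}}\leq\essup_{s\in\Omega}\sr{e^{t_0A(s)}}$. Although $\|e^{nt_0A(s)}\|^{1/n}\to\sr{e^{t_0A(s)}}\leq q:=\essup_{s\in\Omega}\sr{e^{t_0A(s)}}$ for almost every $s$, the rate of convergence and the transient constants depend on $s$, and the supremum over $n$ need not interchange with the essential supremum over $s$; this is precisely the mechanism behind the failure of the spectral mapping theorem illustrated by the example above. Controlling $\essup_{s\in\Omega}\|e^{nt_0A(s)}\|$ uniformly therefore seems to require exploiting the extra structure available, namely the local boundedness and strong continuity of the $C_0$-semigroup $\bigl(e^{t\A}\bigr)_{t\geq 0}$, in order to obtain an essentially bounded-in-$s$ control of the transient constants. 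Securing this uniform estimate is, I expect, the crux of the whole argument.
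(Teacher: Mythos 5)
Your completed steps are correct, and they already contain everything the paper itself offers: the paper gives no written proof of this theorem at all, only the remark that Lemma~\ref{lemNORM} ``immediately leads to'' it, i.e.\ precisely your basic identity $\|e^{t\A}\|=\essup_{s\in\Omega}\|e^{tA(s)}\|$. Your (a)\,$\Leftrightarrow$\,(b), your (a)\,$\Rightarrow$\,(d) via the negative growth bound plus the rational-$t$/null-set argument, and your (d)\,$\Rightarrow$\,(b),(c) are all sound, so you have the equivalence of (a), (b), (d) and one half of (c). What you have not proved is exactly what the paper also silently skips.

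The step you flag as the crux is a genuine gap, and in fact it cannot be closed: the inequality $\sr{e^{t_0\A}}\leq\essup_{s\in\Omega}{\sr{e^{t_0A(s)}}}$ fails for multiplication semigroups, so (c) does not imply (a) as stated. Take $\Omega=\N$ with counting measure, $X=\ell^2(\N)$, and $A(n):=-\epsilon I+\epsilon SP_n$, where $S$ is the unilateral shift and $P_n$ the projection onto the first $n$ coordinates, so that $SP_n$ is the $n\times n$ nilpotent Jordan block extended by zero. The $A(n)$ are uniformly bounded, so $\A$ is a bounded multiplication operator generating a (even uniformly continuous) multiplication semigroup with $e^{tA(n)}=e^{-\epsilon t}e^{\epsilon tSP_n}$. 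Since $SP_n$ is nilpotent, $\sr{e^{t_0A(n)}}=e^{-\epsilon t_0}<1$ for every $n$ and every $t_0>0$, so (c) holds. But $SP_n\to S$ strongly, hence $e^{\epsilon tSP_n}\to e^{\epsilon tS}$ strongly with $\|e^{\epsilon tSP_n}\|\leq e^{\epsilon t}$, and $\|e^{\epsilon tS}\|=\sr{e^{\epsilon tS}}=e^{\epsilon t}$ because $\sigma(S)$ is the closed unit disc; lower semicontinuity of the norm under strong convergence then gives $\sup_n\|e^{\epsilon tSP_n}\|=e^{\epsilon t}$, so by Lemma~\ref{lemNORM}, $\|e^{t\A}\|=e^{-\epsilon t}\sup_n\|e^{\epsilon tSP_n}\|=1$ for all $t\geq 0$. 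Thus (a), (b), (d) all fail while (c) holds: the supremum over $n$ of the transient constants is exactly what escapes control, just as in the Zabczyk-type example at the beginning of the paper, and the ``extra structure'' of being a multiplication semigroup (here even bounded and contractive) does not supply the uniformity you were hoping for. So your suspicion was exactly right, and it exposes an error in the paper rather than a deficiency in your argument: (c) is strictly weaker than (a) in infinite dimensions. The equivalence can be rescued only under additional hypotheses, e.g.\ $\dim X<\infty$ (the Curtain--Iftime--Zwart setting, where $\|T^n\|\leq p(n)q^n$ with $p$ and the constants controlled solely by $\|T\|$, the dimension, and $q\geq\sr{T}$), or by replacing (c) with a condition carrying uniformity in $s$, such as $\sr{e^{t_0\A}}<1$ or $\essup_{s\in\Omega}\bigl\|\bigl(\lambda-e^{t_0A(s)}\bigr)^{-1}\bigr\|<\infty$ for all $\lambda$ with $q<|\lambda|\leq 1$.
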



\begin{rem}
Note that, due to Remark 6, the uniform stability of a multiplication semigroup is independent of the value of $p$ as long as $1 \leq p \leq \infty$.
\end{rem}

\section{Strong Stability}

In this section we study strong stability in our context.


\begin{defn}\cite[Definition V.1.1, p. 296]{enna00}
A $C_0$-semigroup $\SG$ of operators on a Banach space is called {\em strongly stable} if $\|T(t)x\|\rightarrow 0$ as $t\rightarrow\infty$ for all $x \in X$.
\end{defn}


We now show that a multiplication semigroup is strongly stable if and only if the pointwise semigroups are uniformly bounded in $s$ and strongly stable almost everywhere. The backward implication was proved by Curtain-Iftime-Zwart in \cite{CIZ09} for the special case where $\Omega=[0,1],\ p=2$ and $X= \C^n$. The other implication was conjectured in the same paper and has since been proved by Hans Zwart (see Theorem on p. 3 in \cite{Zwart}), again for $X=\C^n$. 


\begin{thm} \label{thmSG_STRONG}
Suppose that the multiplication operator $\A$ generates a $C_0$-semigroup of multiplication operators $(e^{t\A})_{t \geq 0}$ on $L^p(\Omega,X)$ with $1 \leq p < \infty$ such that $\|e^{t\A}\| \leq C$ for all $t\geq 0$ and some constant $C >0$.  
Then the following are equivalent.
\begin{enumerate}
 \item[(a)] The semigroup $\lt(e^{t\A}\rt)_{t\geq 0}$ is strongly stable.
 \item[(b)] The pointwise semigroups $\lt(e^{tA(s)}\rt)_{t\geq 0}$ on $X$ are strongly stable for almost all $s\in \Omega$.
\end{enumerate}

\end{thm}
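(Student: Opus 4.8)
The plan is to prove both implications, using the boundedness hypothesis $\|e^{t\A}\|\le M$ as the main lever. The first step, common to both directions, is to upgrade this to a pointwise bound that is uniform in $t$. By Lemma~\ref{lemNORM}, for each fixed $t\ge0$ we have $\essup_{s\in\Omega}\|e^{tA(s)}\|=\|e^{t\A}\|\le M$, so $\|e^{tA(s)}\|\le M$ off a null set $N_t$. Taking the union of the $N_t$ over rational $t$ and using the strong continuity of the pointwise semigroups to pass from rational to all real $t$, one finds a single null set $N$ with $\|e^{tA(s)}\|\le M$ for every $t\ge0$ and every $s\in\Omega\bk N$.

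For the implication (b)$\Rightarrow$(a), which I would carry out for $1\le p<\infty$, I would argue by dominated convergence. For $f\in L^p(\Omega,X)$,
\[
\|e^{t\A}f\|^p=\int_\Omega\|e^{tA(s)}f(s)\|^p\,\mathrm{d}\mu(s).
\]
By hypothesis the integrand tends to $0$ for almost every $s$, and it is dominated by the integrable function $M^p\|f(s)\|^p$. Testing an arbitrary sequence $t_n\to\infty$ and applying the dominated convergence theorem yields $\|e^{t\A}f\|\to0$, i.e.\ strong stability.

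The implication (a)$\Rightarrow$(b) is where I expect the real difficulty. Since $X$ is separable, it suffices to prove that for each $x$ in a fixed dense sequence $(x_n)$ one has $\|e^{tA(s)}x\|\to0$ for almost all $s$; the uniform bound then extends this to all $x$, and a countable union of null sets gives almost everywhere strong stability. Fixing such an $x$ and testing strong stability against $f=\mathbbm{1}_Y x$ for finite-measure sets $Y$ gives $\int_Y\|e^{tA(s)}x\|^p\,\mathrm{d}\mu\to0$. The obstacle is that $L^1$-convergence of these integrals does not by itself force pointwise decay. The decisive structural fact is an almost-monotonicity coming from the uniform bound: for $t'\ge t$,
\[
\|e^{t'A(s)}x\|=\|e^{(t'-t)A(s)}e^{tA(s)}x\|\le M\,\|e^{tA(s)}x\|.
\]
Setting $L(s):=\limsup_{t\to\infty}\|e^{tA(s)}x\|$ (measurable after reducing to rational times via strong continuity), this inequality gives $\|e^{tA(s)}x\|\ge L(s)/M$ for every $t\ge0$. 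Hence on $E:=\{s:L(s)>0\}$ the integrand is bounded below by the positive, $t$-independent function $(L(s)/M)^p$. If $\mu(E)>0$, the finite subset property furnishes $Y\sse E$ with $0<\mu(Y)<\infty$, and then $\int_Y\|e^{tA(s)}x\|^p\,\mathrm{d}\mu\ge\int_Y(L(s)/M)^p\,\mathrm{d}\mu>0$ for all $t$, contradicting convergence to $0$. Therefore $\mu(E)=0$, so $L=0$ almost everywhere, which is precisely the asserted pointwise strong stability.

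Thus the heart of the argument is the conversion of integral decay into pointwise decay in the forward direction, made possible by the almost-monotonicity that the uniform bound $M$ provides; without such a bound a typewriter-type phenomenon would block the implication. I would also note that the dominated-convergence step genuinely restricts (b)$\Rightarrow$(a) to $p<\infty$: for $p=\infty$ a diagonal multiplier on $\ell^\infty$ with decay rates $a_n=1/n$ is bounded and has strongly stable, uniformly bounded pointwise semigroups, yet fails to be strongly stable, so I would treat the case $1\le p<\infty$.
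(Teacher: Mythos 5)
Your proof is correct, and your backward implication (b)$\Rightarrow$(a) coincides with the paper's: dominated convergence with the dominating function $M^p\|f(\cdot)\|_X^p$. In the forward direction, however, you take a genuinely different route. The paper applies the Riesz subsequence theorem to the $L^p$-convergence of $e^{t\A}$ applied to (restrictions of) the constant function $f_x\equiv x$, extracting a sequence $t_k\to\infty$ along which $\|e^{t_kA(s)}x\|\to 0$ almost everywhere, and then upgrades this subsequential decay to decay along all of $\R_+$ by writing $t=t_{k_\epsilon}+r$ and invoking $\|e^{rA(s)}\|\le M$. You bypass subsequence extraction entirely: from the same key inequality $\|e^{t'A(s)}x\|\le M\|e^{tA(s)}x\|$ for $t'\ge t$ you deduce the $t$-uniform lower bound $\|e^{tA(s)}x\|\ge L(s)/M$ with $L(s)=\limsup_{t\to\infty}\|e^{tA(s)}x\|$, and obtain a contradiction by integrating over a finite-measure subset of $\{s : L(s)>0\}$ (here the finite subset property stands in for the paper's $\sigma$-finite exhaustion by the sets $\Omega_n$). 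Both arguments hinge on exactly the same almost-monotonicity supplied by the uniform bound $M$, but yours trades the measure-theoretic subsequence theorem for a measurability check on $L$, which you handle correctly via rational times and strong continuity; this makes the argument somewhat more self-contained and quantitative. Two further points in your favour: your explicit construction of a single null set off which $\|e^{tA(s)}\|\le M$ holds for \emph{all} $t\ge 0$ simultaneously makes precise a step the paper passes over in one sentence at the start of its proof, and your $\ell^\infty$ counterexample with rates $1/n$ substantiates the restriction to $1\le p<\infty$ in the implication (b)$\Rightarrow$(a), which the paper only records implicitly in the remark following the theorem.
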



\begin{proof}
Note that, by Lemma \ref{lemNORM}, we have $\|e^{t\A}\| \leq C$ for all $t\geq 0$ if and only if $\|e^{tA(s)}\| \leq C$ for almost all $s\in \Omega$ and for all $t\geq 0$.

 (a)\,$\implies$\,(b): Assume that $\|e^{t\A}f\|\rightarrow 0$ as $t\rightarrow\infty$ for all $f\in L^p(\Omega,X)$.

Choose an arbitrary $x\in X$ and define the function $f_x:\Omega\rightarrow X$ by
$$
f_x(s):= x
$$
for all $s\in\Omega$.
Since $\Omega$ is $\sigma$-finite, we can write $\Omega=\cup_{n\in\N}{\Omega_n}$ where $\mu(\Omega_n) < \infty$ for every $n\in\N$. Then $\mathbbm{1}_{\Omega_n}{f_x} \in L^p(\Omega,X)$ for every $n\in\N$.
By assumption, we have that
$$
\left\| e^{t\A} \bigl( \mathbbm{1}_{\Omega_n}{f_x} \bigr) \right\|\rightarrow 0 \quad \text{as} \quad t\rightarrow\infty
$$
for every $n\in\N$.
Therefore, the Riesz subsequence theorem (see e.g. proof of \cite[Chapter I, Theorem 3.12]{RudAna}) implies that, for every $n\in\N$, there exists a sequence $(t_k)_{k \in \N} \subset \R_+$ tending to infinity as $k\rightarrow\infty$, such that the functions $e^{t_k\A}\bigl( \mathbbm{1}_{\Omega_n}{f_x} \bigr)$ converge pointwise to $0$, almost everywhere, i.e.
\begin{align}\label{cvg}
\left\|e^{t_kA(s)} \bigl( \mathbbm{1}_{\Omega_n}(s){f_x}(s) \bigr) \right\|_{ X} = \left\|e^{t_kA(s)}x\right\|_{ X} \rightarrow 0 \quad \text{as} \quad k\rightarrow\infty
\end{align}
for $s\in  \Omega_n\bk N_{(x,n)}$, where $N_{(x,n)}$ is a subset of $\Omega$ of measure $0$. We now show that \eqref{cvg} implies that $\left\|e^{tA(s)}x\right\|_{ X} \rightarrow 0$ as $t\rightarrow\infty$ for $s\in \Omega_n\bk N_{(x,n)}$.

Let $\epsilon>0$. Then $\left\|e^{t_kA(s)}x\right\|_{ X}<\frac{1}{C}\epsilon$ for all $k$ greater than or equal to some $k_\epsilon\in\N$. For each $t>t_{k_\epsilon}$, we can write $t=t_{k_\epsilon}+r$ where $r\in\R_+$. Then we have that
\begin{align*}
\left\|e^{tA(s)}x\right\|_{ X} &=    \left\|e^{(r + t_{k_\epsilon})A(s)}x\right\|_{ X} \\
                      &=    \left\| \lt(e^{rA(s)}\rt) \lt(e^{t_{k_\epsilon}A(s)}x\rt) \right\|_{ X} \\
                      &\leq \left\|e^{rA(s)}\right\| \left\|e^{t_{k_\epsilon}A(s)}x\right\|_{ X} \\
                      &\leq C  \left\|e^{t_{k_\epsilon}A(s)}x\right\|_{ X} \\
                      &\leq C  \frac{1}{C}\epsilon \\
                      &=    \epsilon.
\end{align*}
Hence,
\begin{align}\label{cvg2}
 \left\|e^{tA(s)}x\right\|_{ X}\rightarrow 0 \quad \text{as} \quad t\rightarrow\infty
\end{align}
for all $s\in \Omega_n\bk N_{(x,n)}$. It follows that \eqref{cvg2} holds for all $s\in\Omega\bk{\left(\cup_{n\in\N}N_{(x,n)} \right)}$. Observe that $N_x := \cup_{n\in\N}N_{(x,n)}$, being a countable union of null sets, is also a null set and hence we have the convergence \eqref{cvg2} almost everywhere.

For each $x \in  X$, 
\eqref{cvg2} holds for all $s\in \Omega\bk N_x$. Now, choose any countable dense subset $C\subset X$. Then \eqref{cvg2} holds for each $x\in C$, for all $s\in \Omega\bk\left(\cup_{x\in{C}}{N_x}\right)$. Note that $\cup_{x\in{C}}{N_x}$ is also null set. So we have that \eqref{cvg2} holds for all $x$ in a dense subset of $ X$, almost everywhere. Because the semigroups $\lt(e^{tA(s)}\rt)_{t\geq 0}$ are bounded, it follows that \eqref{cvg2} holds for all $x\in X$, almost everywhere, which is what we wanted to prove.

(b)\,$\implies$\,(a): Assume that $\left\|e^{tA(s)}x\right\|_{ X}\rightarrow 0$ as $t\rightarrow\infty$ for all $x\in X$ and almost all $s\in \Omega$.

Choose an arbitrary function $f\in L^p(\Omega, X)$. Then
$$
\left\|e^{tA(s)}f(s)\right\|_{ X}^p \leq C^p\left\|f(s)\right\|_{ X}^p
$$
for almost all $s\in \Omega$.
Hence, the functions $\left\|e^{t\A}f(\cdot)\right\|_{ X}^p$ are dominated by the integrable function $\left\|Mf(\cdot)\right\|_{ X}^p$. Because of our assumption, we know that\\ $\left\|e^{tA(s)}f(s)\right\|_{ X}^p\rightarrow 0$ as $t\rightarrow\infty$ for almost all $s\in \Omega$. It now follows from Lebesgue's dominated convergence theorem that
$$\int_{\Omega}{\left\|e^{tA(s)}f(s)\right\|_{ X}^p{\text d}s}=\left\|e^{t\A}f\right\|^p\rightarrow 0\quad\text{ as }\quad t\rightarrow\infty.$$

Thus the proof is complete.

\end{proof}


\begin{rem}
As before, strong stability of a multiplication semigroup is independent of the value $p$, as long as $1\leq p < \infty$.
\end{rem}

\section{Weak Stability}
The concept of weak stability is the most difficult to investigate in this context. We include a trivial example in the scalar case, where the multiplication semigroup is weakly stable, but where none of the pointwise semigroups are.  


\begin{exm}\label{exm1}
We use the notation introduced in Remark \ref{remNOT} with $\Omega=[0,1]$, $X=\C$ and $A(s):=is$ for all $s\in [0,1]$. Then the semigroup $\left(e^{t\A}\right)_{t\geq 0}$ is weakly stable, but none of the pointwise semigroups are. 
\end{exm}


We show in the proposition below that the weak stability of almost every pointwise semigroup $\left(e^{tA(s)}\right)_{t\geq 0}$ does imply that the multiplication semigroup $\left(e^{t\A}\right)_{t\geq 0}$ is weakly stable.


\begin{prop} \label{propWeak}
Let $\left(e^{t\A}\right)_{t\geq 0}$ be a bounded multiplication semigroup on the space $L^p(\Omega,X)$, with $1\leq p<\infty$, where $X$ is a reflexive Banach space. 

If the pointwise semigroups $\left(e^{tA(s)}\right)_{t\geq 0}$ are weakly stable for almost all $s\in\Omega$, then the multiplication semigroup $\left(e^{t\A}\right)_{t\geq 0}$ is weakly stable.
\end{prop}


\begin{proof}
Assume that the pointwise semigroups $\left(e^{t A(s)}\right)_{t\geq 0}$ are weakly stable for almost all $s\in\Omega$. Then there exists a null set $N\subset\Omega$ such that
  $$
    \psi\bigl(e^{tA(s)}x\bigr) \rightarrow 0 \text{ as } t\rightarrow\infty
  $$
for all $x\in X$, $\psi\in X'$, and for all $s\in\Omega\bk N$, where $X'$ denotes the continuous dual space of $X$. Since $X$ is a reflexive Banach space, the dual space of $L^p(\Omega,X)$ is $L^q(\Omega,X')$, where $\frac{1}{p} + \frac{1}{q} = 1$ (see \cite[Theorem 8.20.5, p.\,607]{Don95}). Choose arbitrary functions $f\in L^p(\Omega,X)$ and $g\in L^q(\Omega,X')$ (or $g\in L^\infty(\Omega,X')$, if $p=1$). Then
  \begin{align*}
    \left| g(s)  \biggl( e^{tA(s)}f(s) \biggr)  \right|
                                   & \leq \|g(s)\|_{X'} \|e^{t A(s)}f(s)\|_X \\
                                   & \leq C \|g(s)\|_{X'} \|f(s)\|_X\\
  \end{align*}
almost everywhere.

Since the real valued function $\|g(\cdot)\|_{X'}$ is in $L^q$ (or in $L^\infty$) and the function $\|f(\cdot)\|_X$ is in $L^p$, we have that the function $C \|g(\cdot)\| \|f(\cdot)\|_X$ is in $L^1$ and hence integrable. The functions $ g(\cdot)\Bigl(e^{tA(\cdot)}f(\cdot)\Bigr) $ are integrable, converge pointwise almost everywhere to $0$ as $t\rightarrow \infty$ and are dominated by the function $C \|g(\cdot)\|_{X'} \|f(\cdot)\|_X$. It follows from Lebesgue's Dominated Convergence Theorem that
$$
\int_\Omega{ g(s)\bigl(e^{tA(s)}f(s)\bigr) \text{d}}\mu(s)\rightarrow 0\text{\ \ as\ \ }t\rightarrow\infty.
$$
Hence, the semigroup $\left(e^{t\A}\right)_{t\geq 0}$ is weakly stable.
\end{proof}


\begin{rem}
This result is independent of the value of $p$, as long as $1\leq p<\infty$.
\end{rem}

\section{Almost Weak Stability}

We now investigate the almost weak stability of the multiplication semigroup $\bigl(e^{t\A}\bigr)_{t\geq 0}$ and of the pointwise semigroups $\bigl(e^{t A(s)}\bigr)_{t\geq 0}$ with $s\in\Omega$. In order to define this stability concept, we mention that the density of a Lebesgue measurable subset $Z$ of $\R_+$ is $d := \lim_{t\rightarrow\infty}{\frac{\mu(Z\cap[0,t])}{t}}$ ($\mu$ being the Lebesgue measure), whenever the limit exists.

%
%
%
%

\begin{defn}[Almost Weak Stability]
Let $\SG$ be a $C_0$-semigroup on a reflexive Banach space $X$. Then $\SG$ is  called {\em almost weakly stable} if there exists a Lebesgue measurable set $Z\subset \R_+$ of density $1$ such that
$$
T(t) \rightarrow 0 \quad \text{as} \quad t \to \infty \,,\ t\in Z,
$$
in the weak operator topology.
\end{defn}


The main result of this section is a characterisation of almost weak stability of a multiplication semigroup via Ces\`aro stability (see Definition \ref{defCesaroStable}) of the pointwise semigroups. This is remarkable since it is not true that almost weak stability of a multiplication semigroup and that of the pointwise semigroups are equivalent. We introduce Ces\`aro stability which seems to be the appropriate concept.


\begin{defn}[Ces\`aro mean; Mean Ergodic Semigroup, Mean Ergodic Projection](\cite[p.\,20, Chapter I, Definition 2.18, 2.20]{eis10})
Let $\SG$ be a $C_0$-semigroup of operators on a Banach space $X$ with generator $\bigl(A,D(A)\bigr)$. For every $t > 0$, the {\em Ces\`aro mean} $S(t)\in\Lop[X]$ is defined by
$$
S(t)x := \frac{1}{t}\int_{0}^t{T(s)x\text{d}s}
$$
for all $x\in X$.
The semigroup $\SG$ is called {\em mean ergodic} if the Ces\`aro means converge pointwise as $t$ tends to $\infty$. In this case the operator $P \in \Lop[X]$ defined by
$$
Px := \lim_{t\to \infty}{S(t)x}
$$
is called the {\em mean ergodic projection} corresponding to $\SG$.
\end{defn}


\begin{rem}\cite[p.\,21-22; Chapter I; Remark 2.21, Proposition 2.24 and Theorem 2.25]{eis10}
The mean ergodic projection $P$ commutes with  $\SG$ and is indeed a projection. A bounded $\SG$ is mean ergodic if and only if $X = \ker{A} \oplus \cl[\ran{A}]$, where $\ker{A}$ and $\ran{A}$ denote, respectively, the kernel and range of $A$. One also has that $\ran{P}=\ker{A}=\fix{\SG}$ and $\ker{P}=\cl[\ran{A}]$ where $\fix{\SG}$ is the fixed space of $\SG$.
\end{rem}


\begin{defn}[Ces\`aro Stability of Semigroups]\label{defCesaroStable}
A semigroup $\SG$ is called {\em Ces\`aro stable} if the Ces\`aro means converge to 0 as $t\rightarrow\infty$, i.e. the mean ergodic projection is the $0$ operator.
\end{defn}


\begin{thm}\label{thmAwsSg}
Let $X$ be a reflexive, separable Banach space.
Take $(\A,D(\A))$ to be the generator of a bounded multiplication semigroup $\bigl(e^{t\A}\bigr)_{t\geq 0}$ on $L^p(\Omega,X)$ with $1<p<\infty$, and let $A(s)$ with $s\in\Omega$ be the family of operators corresponding to $\A$.

The semigroup $\bigl(e^{t\A}\bigr)_{t\geq 0}$ is almost weakly stable if and only if, for each $ir \in i\R$, the rescaled pointwise semigroups $\bigl(e^{irt}e^{tA(s)}\bigr)_{t\geq 0}$ are Ces\`aro stable, almost everywhere.

%

\end{thm}


\begin{rem}
If the pointwise semigroups are almost weakly stable, almost everywhere, then the rescaled pointwise semigroups $\bigl(e^{irt}e^{tA(s)}\bigr)_{t\geq 0}$ are Ces\`aro stable for each $ir \in i\R$ almost everywhere which implies, by the above theorem, that the multiplication semigroup $\bigl(e^{t\A}\bigr)_{t\geq 0}$ is almost weakly stable.
\end{rem}


In order to prove Theorem \ref{thmAwsSg} we need the following characterisation of almost weak stability of a semigroup on a reflexive Banach space via the point spectrum of its generator.


\begin{lem}{\cite[Chapter II, Theorem 4.1]{eis10}}\label{lemAwsSg}
Let $\bigl(T(t)\bigr)_{t\geq 0}$ be a bounded $C_0$-semigroup with generator $(A, D(A))$ on a reflexive, separable Banach space $X$. Then the following are equivalent.
\begin{enumerate}
	\item $\bigl(T(t)\bigr)_{t\geq 0}$ is almost weakly stable.
	\item $\PtSp{A}\cap i\R = \emptyset$, where $\PtSp{A}$ is the point spectrum of $A$.
\end{enumerate}
\end{lem}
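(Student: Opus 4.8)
The plan is to deduce everything from Lemma~\ref{lemAwsSg} together with one spectral correspondence between the generator $\A$ and the pointwise generators $A(s)$. Writing $N_\lambda := \{s\in\Omega : \lambda\in\PtSp{A(s)}\}$ (which for $\lambda\in i\R$ is the set named in the statement, now extended to all $\lambda\in\C$), the crux is the claim
\[
\lambda\in\PtSp{\A}\quad\Longleftrightarrow\quad \mu(N_\lambda)>0\qquad(\lambda\in\C).
\]
Two preliminaries set the stage. First, by Lemma~\ref{lemNORM} the bound $\|e^{t\A}\|\le M$ is equivalent to $\|e^{tA(s)}\|\le M$ for almost every $s$ and all $t$, so almost every pointwise semigroup is bounded. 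Second, since $\Omega$ is separable and $X$ is reflexive and separable with $1<p<\infty$, the space $L^p(\Omega,X)$ is reflexive and separable; hence Lemma~\ref{lemAwsSg} applies both to $\A$ on $L^p(\Omega,X)$ and, for almost every $s$, to $A(s)$ on $X$.

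For the correspondence, the forward implication is immediate: an eigenfunction $f\ne0$ with $\A f=\lambda f$ satisfies $A(s)f(s)=\lambda f(s)$ a.e., and since $f\ne0$ in $L^p$ the set where $f(s)\ne0$ has positive measure, on which $f(s)$ is a $\lambda$-eigenvector of $A(s)$; thus $\mu(N_\lambda)>0$. The converse is where the work lies. Assuming $\mu(N_\lambda)>0$, I would use that $x$ is a $\lambda$-eigenvector of $A(s)$ iff $e^{tA(s)}x=e^{\lambda t}x$ for all $t\in\Q_+$ (by strong continuity), and set $\Psi(s):=\{x : \|x\|=1,\ \sup_{t\in\Q_+}\|e^{tA(s)}x-e^{\lambda t}x\|=0\}$, so that $N_\lambda=\{s:\Psi(s)\ne\emptyset\}$. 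Since $s\mapsto e^{tA(s)}x$ is measurable for each $x$ while $x\mapsto e^{tA(s)}x$ is continuous, the map $(s,x)\mapsto\sup_{t\in\Q_+}\|e^{tA(s)}x-e^{\lambda t}x\|$ is a normal integrand with $\Sigma\otimes\mathcal B(X)$-measurable graph; as $X$ is Polish, a measurable selection theorem (von Neumann--Aumann) then shows $N_\lambda$ is measurable and produces a measurable field $s\mapsto x(s)\in\Psi(s)$. Choosing $E\sse N_\lambda$ with $0<\mu(E)<\infty$ (possible by $\sigma$-finiteness and the finite subset property) and putting $f:=\mathbbm{1}_E\,x(\cdot)$, one has $f\in L^p(\Omega,X)$, $f\ne0$ and $e^{t\A}f=e^{\lambda t}f$; differentiating at $t=0$ gives $f\in D(\A)$ and $\A f=\lambda f$, so $\lambda\in\PtSp{\A}$.

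Granting the correspondence, the three assertions fall out through Lemma~\ref{lemAwsSg}. If the pointwise semigroups are almost weakly stable for a.e.\ $s$, then $\PtSp{A(s)}\cap i\R=\emptyset$ a.e.; were some $\lambda\in i\R$ in $\PtSp{\A}$, the correspondence would give $\mu(N_\lambda)>0$, contradicting this since $N_\lambda\sse\{s:\PtSp{A(s)}\cap i\R\ne\emptyset\}$. Hence $\PtSp{\A}\cap i\R=\emptyset$ and Lemma~\ref{lemAwsSg} yields almost weak stability of $(e^{t\A})$. Conversely, if $(e^{t\A})$ is almost weakly stable, Lemma~\ref{lemAwsSg} gives $\PtSp{\A}\cap i\R=\emptyset$, whence the correspondence forces $\mu(N_\lambda)=0$ for every $\lambda\in i\R$. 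Finally, $\cup_{\lambda\in i\R}N_\lambda=\{s:\PtSp{A(s)}\cap i\R\ne\emptyset\}$, which by Lemma~\ref{lemAwsSg} (applied to each bounded $A(s)$) is precisely the set of $s$ at which the pointwise semigroup fails to be almost weakly stable; thus $\mu(\cup_{\lambda\in i\R}N_\lambda)=0$ exactly when the pointwise semigroups are almost weakly stable almost everywhere.

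The main obstacle is the converse half of the correspondence: manufacturing a single measurable $L^p$-eigenfunction of $\A$ out of an a.e.-defined field of pointwise eigenvectors. This is exactly where separability of $X$ (so that $X$ is Polish and a measurable selection exists) is essential, and where one must certify the normal-integrand structure so that $N_\lambda$ is measurable. A lesser point arises in the last equivalence: $\cup_{\lambda\in i\R}N_\lambda$ is an uncountable union of sets that are individually null once $(e^{t\A})$ is almost weakly stable, so its measurability and nullity cannot be read off termwise and must instead be handled through the identity $\cup_{\lambda\in i\R}N_\lambda=\{s:\PtSp{A(s)}\cap i\R\ne\emptyset\}$ (passing to the completion of $\mu$, or to outer measure, if needed).
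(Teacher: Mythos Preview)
Your proposal does not address the stated lemma at all. Lemma~\ref{lemAwsSg} is a result quoted from \cite{eis10} and is not proved in the paper; it is an \emph{input}, not a target. What you have written is a proof of Theorem~\ref{thmAwsSg}, using Lemma~\ref{lemAwsSg} as a black box. So as a proof of the displayed statement there is nothing to compare: neither you nor the paper attempts to prove it.

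That said, if your intention was Theorem~\ref{thmAwsSg}, your argument is essentially correct but follows a genuinely different route for the key spectral correspondence $\lambda\in\PtSp{\A}\Leftrightarrow \mu(N_\lambda)>0$. The paper isolates this as Proposition~\ref{propPtSpSG} and proves it via \emph{mean ergodic theory}: for $\lambda\in i\R$ one rescales to $\lambda=0$, invokes von~Neumann's mean ergodic theorem to obtain the pointwise mean ergodic projections $P(s)$ onto $\ker A(s)$, checks that $s\mapsto P(s)x$ is Bochner measurable as a pointwise limit of Ces\`aro means, and shows that the induced multiplication operator $\mP$ is the mean ergodic projection of $(e^{t\A})_{t\ge0}$ onto $\ker\A$. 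Then $\mu(N_0)>0$ forces $\mP\ne0$ by Lemma~\ref{lemNORM}, hence $\ker\A\ne\{0\}$. You instead build an eigenfunction directly by a \emph{measurable selection} argument (Carath\'eodory integrand, Polish $X$, von~Neumann--Aumann). Your route works for all $\lambda\in\C$ and is conceptually straightforward, but it imports heavier descriptive-set-theoretic machinery and, as you note, the projection $N_\lambda$ is a~priori only analytic, so one must pass to the completion of $\mu$. The paper's ergodic approach is restricted to $\lambda\in i\R$ (which is all that is needed) but yields honest $\Sigma$-measurability for free, since $P(\cdot)x$ arises as a pointwise limit of measurable functions; no selection theorem or completion is required. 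Both arguments use the separability hypotheses in the same way, namely to make $L^p(\Omega,X)$ reflexive and separable so that Lemma~\ref{lemAwsSg} applies.
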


We are now ready to prove Theorem \ref{thmAwsSg} by using the above characterisation as well as the spectral mapping result of Theorem \ref{thmPtSpM}.


\begin{proof}
Using Lemma \ref{lemAwsSg} almost weak stability of $\bigl(e^{t\A}\bigr)_{t\geq 0}$ is equivalent to $\PtSp{\A}\cap i\R = \emptyset$. By Corollary \ref{corPtSpA}, this is equivalent to the following: for each $ir\in i\R$, $\mu\bigl(\{ s\in\Omega | ir \in \PtSp{A(s)} \}\bigr) = 0$. This in turn is equivalent to the fact that, for each $ir\in i\R$, the rescaled pointwise semigroups $\bigl(e^{irt}e^{tA(s)}\bigr)_{t\geq 0}$ are Ces\`aro stable almost everywhere.


\end{proof}

%
%
%

\begin{exm}
Once again, we consider the multiplication semigroup of Example \ref{exm1}. In this example we have for the point spectrum that $\PtSp{A(s)} \cap i\R \not=\emptyset$ for each $s\in\Omega$, since $is\in\PtSp{A(s)}$ for all $s\in[0,1]$. It follows that none of the pointwise semigroups are almost weakly stable, whereas the point spectrum of $\A$ is empty, which means that the semigroup $\bigl(e^{t\A}\bigr)_{t\geq 0}$ is almost weakly stable.
\end{exm}


\begin{rem}
Since the stability is determined by the pointwise semigroups, the value of $p$ is irrevelant, as long as $1<p<\infty$. 
\end{rem}

\section{Stability of Multiplication Operators}
It is possible to develop analogous results for time-discrete semigroups of the form $\{\M^n \mid \ n \in \mathbb{N}\}$ for a bounded multiplication operator $\M$ on $L^p(\Omega,X)$. The relevant stability properties are the following.
\begin{defn}Let $T$ be an operator on a Banach space $X$. Then $T$ is
\begin{itemize}
  \item[(i)] {\em uniformly stable} if $\left\|T^n\right\|\longrightarrow 0$ as $n\rightarrow\infty$.
  \item[(ii)] {\em strongly stable} if $\left\|T^nf\right\|\longrightarrow 0$ as $n\rightarrow\infty$ for all $f \in X$.
  \item[(iii)] {\em weakly stable} if $\psi\left(T^nf\right)\longrightarrow 0$ as $n\rightarrow\infty$ for all $f \in X$ and all $\psi\in X'$.
  \item[(iv)] {\em almost weakly stable} if $\psi(T^{n_k}f)\longrightarrow 0 \mbox{ as } k\rightarrow\infty$ for all $f \in X, \psi \in X'$, and sequences $(n_k)_{k\in\N} \subset \N$ of density 1.
\end{itemize}
\end{defn}
Recall that the {\em density of a sequence} $(n_k)_{k\in\N} \subset \N$ is
$$
d := \lim_{n\rightarrow\infty}{\frac{|\{ k\ :\ n_k < n \}|}{n}},
$$
if the limit exists and that a bounded linear operator $T$ on a Banach space is called {\em power bounded} if  $\sup_{n \in \mathbb{N}}\left\|T^n\right\| < \infty$.
By using methods analogous to those developed in Sections 1 -- 4 we can characterise these stability properties of a power bounded multiplication operator $\M$ through the pointwise operators $M(\cdot)$. 
\begin{thm}
Let $\M$ be a power-bounded multiplication operator on $L^p(\Omega,X)$ with $1<p<\infty$.
\begin{itemize}
  \item[(i)] Then $\M$ is uniformly stable if and only if $M(s)$ is uniformly stable for almost all $s \in \Omega$ and $\essup_{s\in\Omega}{\left\|M(s)^n\right\|} \rightarrow 0$ as $n\rightarrow\infty$, or, equivalently, that $\essup_{s\in\Omega}{\sr{M(s)}} < 1$.
  \item[(ii)] If $\Omega$ is $\sigma$-finite, then $\M$ is strongly stable if and only if $M(s)$ is strongly stable for almost all $s \in \Omega$.
  \item[(iii)] If the Banach space $X$ is separable, then $\M$ is weakly stable if the pointwise operators $M(s)$ are weakly stable almost everywhere.
  \item[(iv)] Let the measure space $(\Omega,\Sigma,\mu)$ be separable and let $X$ be a reflexive, separable Banach space. Then $\M$ is almost weakly stable if and only if, for each $\lambda\in\C$ with $|\lambda| = 1$, the rescaled pointwise operators $\lambda M(s)$ are Ces\`aro stable for almost all $s \in \Omega$.
\end{itemize}
\end{thm}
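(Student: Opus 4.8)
The plan is to transfer each of the three stability notions to the pointwise operators exactly as was done for semigroups in Sections 1--3, exploiting that the $n$-th power $\M^n$ is again a multiplication operator, namely the one induced by $s\mapsto M(s)^n$. Consequently Lemma \ref{lemNORM} gives $\|\M^n\|=\essup_{s\in\Omega}\|M(s)^n\|$ for every $n$, and power boundedness of $\M$ is equivalent, again via Lemma \ref{lemNORM}, to a uniform bound $\|M(s)^n\|\le K$ for almost all $s$ and all $n$. This uniform bound will play the role that the semigroup bound $M$ played in Theorem \ref{thmSG_STRONG}.

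For (i) I would first observe that $\M$ is uniformly stable if and only if $\|\M^n\|=\essup_{s\in\Omega}\|M(s)^n\|\to 0$, which visibly forces $\|M(s)^n\|\to 0$ for almost every $s$; this already yields the first characterization. For the spectral radius criterion I would use Gelfand's formula together with the submultiplicativity of $n\mapsto\|\M^n\|$. The easy implication is that if $\essup_{s\in\Omega}\|M(s)^{n_0}\|=:c<1$ for some $n_0$, then $\sr{M(s)}\le c^{1/n_0}<1$ for almost all $s$, whence $\essup_{s\in\Omega}\sr{M(s)}<1$. The main obstacle is the reverse implication, passing from $\essup_{s\in\Omega}\sr{M(s)}<1$ back to $\essup_{s\in\Omega}\|M(s)^n\|\to0$: the pointwise decay rate furnished by Gelfand's formula depends on $s$, and a family with slowly developing transients (for instance nilpotent blocks of growing size) can keep $\essup_{s\in\Omega}\|M(s)^n\|$ bounded away from $0$ even when every $\sr{M(s)}$ is small. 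Here power boundedness must be used in an essential way, controlling the transient uniformly in $s$; concretely I would try to extract, from the uniform bound $\|M(s)^n\|\le K$ and $\sr{M(s)}\le\rho<1$, a single index $n_0$ with $\essup_{s\in\Omega}\|M(s)^{n_0}\|<1$, after which submultiplicativity gives $\|\M^{kn_0}\|\le\|\M^{n_0}\|^k\to0$ and hence uniform stability. I expect this extraction to be the delicate step that pins down the precise hypotheses under which the spectral radius criterion is valid.

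For (ii) I would follow the proof of Theorem \ref{thmSG_STRONG} almost verbatim, replacing the continuous parameter $t$ by $n\in\N$ and the semigroup bound by the power bound. For (a)$\Rightarrow$(b): applying the hypothesis to the functions $f_x|_{\Omega_n}$ with $f_x\equiv x$ and using the Riesz subsequence theorem yields a subsequence $(n_k)$ along which $\|M(s)^{n_k}x\|\to0$ for almost all $s$; the power bound $\|M(s)^{r}\|\le K$ then upgrades this to $\|M(s)^n x\|\to0$ for all $n$, and running over a countable dense set of $x$ together with the uniform bound gives the conclusion almost everywhere. For (b)$\Rightarrow$(a) I would dominate $\|M(s)^n f(s)\|^p\le K^p\|f(s)\|^p$ and invoke Lebesgue's dominated convergence theorem.

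For (iii) I would mirror Proposition \ref{propPtSpSG} and Theorem \ref{thmAwsSg}, using the discrete mean ergodic theorem (the Ces\`aro means $\frac1n\sum_{k=0}^{n-1}T^k$ of a power bounded operator on a reflexive space converge strongly) in place of the continuous one, and the discrete analogue of Lemma \ref{lemAwsSg}: a power bounded operator $T$ on a reflexive, separable Banach space is almost weakly stable if and only if $\PtSp{T}\cap\Gamma=\emptyset$, where $\Gamma=\{\lambda\in\C:|\lambda|=1\}$. Fixing $\lambda\in\Gamma$ and applying the mean ergodic theorem to $\bar\lambda M(s)$ produces measurable pointwise projections onto $\ker(\lambda-M(s))$ whose induced multiplication operator is the mean ergodic projection of $\bar\lambda\M$; arguing as in Proposition \ref{propPtSpSG} this yields $\lambda\in\PtSp{\M}$ if and only if $\lambda\in\PtSp{M(s)}$ on a set of positive measure. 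Combining over a countable dense subset of $\Gamma$ and discarding the resulting null sets, together with the characterization above, then gives the equivalence; the only point requiring care is the passage from a dense subset of $\Gamma$ to all of $\Gamma$, which is handled by the same measurability and approximation arguments already used on $i\R$.
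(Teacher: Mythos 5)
Your part (ii) is correct and is exactly the transcription the paper intends (the paper offers no proof of this theorem beyond the remark that the methods of Sections 1--3 carry over, and your discrete version of Theorem \ref{thmSG_STRONG} is faithful, including the use of the power bound to upgrade subsequence convergence from the Riesz subsequence theorem). The genuine problems are in (i) and (iii). In (i) you leave the decisive step open --- extracting a single $n_0$ with $\essup_{s\in\Omega}\|M(s)^{n_0}\|<1$ from power boundedness plus $\essup_{s\in\Omega}\sr{M(s)}<1$ --- and this step does not merely require care: it fails. The example you fear exists within the hypotheses: take $X=\ell^p(\N)$, $\Omega=\N$, and let $M(n)$ be the shift truncated to the first $n$ coordinates, a nilpotent contraction of index $n$. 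Every $M(n)$ is power bounded by $1$ and $\sr{M(n)}=0$, so $\essup_{s\in\Omega}\sr{M(s)}=0<1$; yet by Lemma \ref{lemNORM}, $\|\M^k\|=\sup_n\|M(n)^k\|=1$ for every $k$, so $\M$ is a power-bounded (indeed contractive) multiplication operator that is not uniformly stable. Power boundedness does not control transients uniformly in $s$ when $X$ is infinite dimensional (it does for a fixed finite-dimensional $X$, which is the only setting in which your extraction can succeed). The condition actually equivalent to uniform stability is $\essup_{s\in\Omega}\|M(s)^{n_0}\|<1$ for some single $n_0$, i.e.\ spectral radius of the operator $\M$ itself strictly less than $1$; this is strictly stronger than the pointwise condition $\essup_{s\in\Omega}\sr{M(s)}<1$, and your argument for the reverse implication cannot be completed as planned.

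In (iii), the step ``combining over a countable dense subset of $\Gamma$ \dots{} gives the equivalence'' is wrong, and there is no ``measurability and approximation argument already used on $i\R$'' in the paper to appeal to: the paper deliberately does \emph{not} state the continuous result as a clean equivalence. Theorem \ref{thmAwsSg} only asserts, in the converse direction, that each set $N_\lambda$ of points $s$ with $\lambda\in\PtSp{A(s)}$ is null, and it imposes the additional hypothesis $\mu\left(\cup_{\lambda\in i\R}N_\lambda\right)=0$ precisely because an uncountable union of null sets need not be null; its scalar example $A(s)=is$ on $\Omega=[0,1]$ shows this failure is real. The discrete analogue defeats your argument the same way: take $M(s):=e^{is}$ on $X=\C$, $\Omega=[0,1]$. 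Every pointwise operator has a unimodular eigenvalue, so no $M(s)$ is almost weakly stable, while $\PtSp{\M}\cap\Gamma=\emptyset$ (the set where $e^{is}$ equals any fixed $\lambda$ is null) and $\M$, a unitary multiplication operator, is almost weakly stable. A countable dense subset of $\Gamma$ cannot detect unimodular eigenvalues that vary with $s$. Your discrete analogue of Proposition \ref{propPtSpSG} (fixed $\lambda\in\Gamma$, mean ergodic projections for $\bar{\lambda}M(s)$) is sound, but it only yields the two-part statement parallel to Theorem \ref{thmAwsSg}, with the extra condition $\mu\left(\cup_{\lambda\in\Gamma}N_\lambda\right)=0$; the unconditional ``only if'' direction of (iii) cannot be proved by your route.
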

\bibliographystyle{siam}
\bibliography{retha}

\begin{thebibliography}{10}

\bibitem{abhn01}
{\sc W.~Arendt, C.~J.~K. Batty, M.~Hieber, and F.~Neubrander}, {\em
  Vector-valued {L}aplace transforms and {C}auchy problems}, vol.~96 of
  Monographs in Mathematics, Birkh\"auser/Springer Basel AG, Basel, second~ed.,
  2001.

\bibitem{ThoAr05}
{\sc W.~Arendt and S.~Thomaschewski}, {\em Local operators and forms},
  Positivity, 9 (2005), pp.~357--367.

\bibitem{BEP06}
{\sc A.~B{\'a}tkai, K.-J. Engel, J.~Pr{\"u}ss, and R.~Schnaubelt}, {\em
  Polynomial stability of operator semigroups}, Math. Nachr., 279 (2006),
  pp.~1425--1440.

\bibitem{CIZ09}
{\sc R.~Curtain, O.~V. Iftime, and H.~Zwart}, {\em System theoretic properties
  of a class of spatially invariant systems}, Automatica, 45 (2009), pp.~1619
  -- 1627.

\bibitem{DU77}
{\sc J.~Diestel and J.~J. Uhl, Jr.}, {\em Vector Measures}, American
  Mathematical Society, Providence, R.I., 1977.
\newblock With a foreword by B. J. Pettis, Mathematical Surveys, No. 15.

\bibitem{Don95}
{\sc R.~E. Edwards}, {\em Functional Analysis}, Dover Publications Inc., New
  York, 1995.
\newblock Theory and applications, Corrected reprint of the 1965 original.

\bibitem{eis10}
{\sc T.~Eisner}, {\em Stability of {O}perators and {O}perator {S}emigroups},
  vol.~209 of Operator Theory: Advances and Applications, Birkh\"auser Verlag,
  Basel, 2010.

\bibitem{en97}
{\sc K.-J. Engel}, {\em {Operator Matrices and Systems of Evolution
  Equations}}.
\newblock Manuscript, 1997.

\bibitem{enna00}
{\sc K.-J. Engel and R.~Nagel}, {\em One-{P}arameter {S}emigroups for {L}inear
  {E}volution {E}quations}, vol.~194 of Graduate Texts in Mathematics,
  Springer, New York, 2000.
\newblock With contributions by S. Brendle, M. Campiti, T. Hahn, G. Metafune,
  G. Nickel, D. Pallara, C. Perazzoli, A. Rhandi, S. Romanelli and R.
  Schnaubelt.

\bibitem{Gra97}
{\sc T.~Graser}, {\em Operator multipliers generating strongly continuous
  semigroups}, Semigroup Forum, 55 (1997), pp.~68--79.

\bibitem{HaWa96}
{\sc V.~Hardt and E.~Wagenf{\"u}hrer}, {\em Spectral properties of a
  multiplication operator}, Math. Nachr., 178 (1996), pp.~135--156.

\bibitem{Hol91}
{\sc A.~Holderrieth}, {\em Matrix multiplication operators generating one
  parameter semigroups}, Semigroup Forum, 42 (1991), pp.~155--166.

\bibitem{KurRyll_65}
{\sc K.~Kuratowski and C.~Ryll-Nardzewski}, {\em A general theorem on
  selectors}, Bull. Acad. Polon. Sci. S\'er. Sci. Math. Astronom. Phys., 13
  (1965), pp.~397--403.

\bibitem{MuNi11}
{\sc D.~Mugnolo and R.~Nittka}, {\em Properties of representations of operators
  acting between spaces of vector-valued functions}, Positivity, 15 (2011),
  pp.~135--154.

\bibitem{RudAna}
{\sc W.~Rudin}, {\em Real and Complex Analysis}, McGraw-Hill Book Co., New
  York, second~ed., 1974.
\newblock McGraw-Hill Series in Higher Mathematics.

\bibitem{Tho03}
{\sc S.~Thomaschewski}, {\em Form methods for autonomous and nonautonomous
  {C}auchy problems}, PhD-Thesis, Uni Ulm, 2003.

\bibitem{Neerven93}
{\sc J.~M. A.~M. van Neerven}, {\em Abstract multiplication semigroups}, Math.
  Z., 213 (1993), pp.~1--15.

\bibitem{Zwart}
{\sc H.~Zwart}, {\em Stability}, unpublished,  (2008).

\end{thebibliography}

\end{document}